\documentclass[10pt,letterpaper,conference]{ieeeconf}


\pagenumbering{gobble}

\usepackage[hyperfootnotes=false]{hyperref}
\hypersetup{ 		
    pdfauthor={Tyler Summers},        
    colorlinks=true,        
    linkcolor=NavyBlue,
    urlcolor=NavyBlue,
    citecolor=Mahogany     		
}

\usepackage[usenames,dvipsnames]{color}
\usepackage{epstopdf}
\usepackage{tabularx}
\usepackage{colortbl}
\usepackage{wrapfig}
\usepackage{cite}

	
\usepackage{amsmath,amssymb,mathrsfs,dsfont}
\usepackage{algorithm,algorithmic}	
\usepackage{todonotes}

\newtheorem{theorem}{Theorem}
\newtheorem{corollary}{Corollary}
\newtheorem{lemma}{Lemma}

\IEEEoverridecommandlockouts

\title{Performance bounds for optimal feedback control in networks}
\author{Tyler Summers and Justin Ruths\thanks{The authors are with the Department of Mechanical Engineering, University of Texas at Dallas, Richardson, TX, 75080 USA. E-mail: \{tyler.summers,jruths\}@utdallas.edu, The work of T. Summers was sponsored by the Army Research Office and was accomplished under Grant Number: W911NF-17-1-0058. The views and conclusions contained in this document are those of the authors and should not be interpreted as representing the official policies, either expressed or implied, of ARO or the U.S. Government.} }

\begin{document}
\maketitle

\begin{abstract}
Many important complex networks, including critical infrastructure and emerging industrial automation systems, are becoming increasingly intricate webs of interacting feedback control loops. A fundamental concern is to quantify the control properties and performance limitations of the network as a function of its dynamical structure and control architecture. We study performance bounds for networks in terms of optimal feedback control costs. We provide a set of complementary bounds as a function of the system dynamics and actuator structure. For unstable network dynamics, we characterize a tradeoff between feedback control performance and the number of control inputs, in particular showing that optimal cost can increase exponentially with the size of the network. We also derive a bound on the performance of the worst-case actuator subset for stable networks, providing insight into dynamics properties that affect the potential efficacy of actuator selection.
We illustrate our results with numerical experiments that analyze performance in regular and random networks. 
\end{abstract}

\section{Introduction}
Recent spectacular advances in computation and communication technologies are transforming our ability to control complex networked systems. Critical infrastructure, industrial automation systems, and many other technological and social networks crucial to modern society are becoming increasingly intricate webs of interacting feedback loops. As this complexity increases, a fundamental concern is to quantify the control properties and performance limitations of the network as a function of its dynamical structure and control architecture.

A variety of metrics can be used to quantify notions of network controllability. Significant recent research has been devoted to studying connections between such notions and the structural properties of the network, and to studying algorithms for designing networks with good controllability properties. One broad line of work has focused on classical binary controllability metrics based on Kalman rank \cite{liu2011controllability,rajapakse2011dynamics,cowan2012nodal,nepusz2012controlling,wang2012optimizing,ruths2014control,olshevsky2014minimal,pequito2016}. Another line of work has focused on metrics based on the Gramian \cite{pasqualetti2014controllability,summers2014submodularity,summers2014optimal,yannature2015,tzoumas2016}. These binary and open-loop notions fail to capture essential feedback and robustness properties, and other recent work has considered more general optimal control and estimation metrics \cite{summers2016actuator,polyak2013lmi,munz2014sensor,dhingra2014admm,zhang2017sensor}.

An important part of understanding network controllability in terms of any metric is expressing fundamental performance limitations. A clear understanding of performance limitations can set practical expectations and guide the design and analysis of network control architectures. Recent work on performance limitations and network controllability include \cite{pasqualetti2014controllability} in the context of the Gramian, and \cite{zhang2017sensor} in the context of sensor selection and Kalman filtering. However, to our best knowledge no such studies have been done in a network context for more general optimal control metrics.


The main contributions of the paper are as follows. First, we derive a set of complementary performance bounds for dynamical networks in terms of optimal feedback control performance. Our bounds are based on the system dynamics and characterize a tradeoff between achievable feedback control performance and the actuator structure. In particular, we show that when the network dynamics are unstable, the optimal cost can increase exponentially with the size of the network for any fixed-size actuator set. The implication of this is that feedback control costs may be extremely high even with an optimal selection of a fixed number of actuators. Second, we derive bounds for the worst-case performance of actuator selection algorithms as a function of the system dynamics for stable systems, showing that greedy algorithms for actuator selection cannot produce arbitrarily bad selections. Finally, we illustrate our results by analyzing performance in regular and random networks. Even though the best case actuator selection may result in large feedback control cost for unstable networks and the worst case actuator selection cannot be arbitrarily bad for stable networks, we show that actuator selections can have a significant effect. 

The rest of the paper is organized as follows. Section II provides preliminaries on optimal control. Section III develops performance bounds based on the system dynamics for stable and unstable networks. Section IV presents illustrative numerical experiments. Section V concludes.

\textbf{Notation.} The eigenvalues of a square matrix $A$ are denoted by $\lambda_i(A)$ and ordered $|\lambda_{max}(A)| = |\lambda_1(A)| \geq |\lambda_2(A)| \geq \cdots \geq |\lambda_n(A)| = |\lambda_{min}(A)|$. The singular values of a matrix $F$ are denoted by $\sigma_i(F)$ and ordered $\sigma_1(F) \geq \sigma_2(F) \geq \cdots \geq \sigma_n(F)$. The condition number of a matrix $V$ is denoted $\text{cond}(V)$.

\section{Preliminaries}
We begin by formulating actuator selection problems based on optimal feedback control performance for linear dynamical systems with quadratic cost functions.  The development follows \cite{summers2016actuator}.

\subsection{Deterministic optimal feedback control} \label{lqr_prob}
The network dynamics are modeled by the discrete-time linear dynamical system evolving on a graph $\mathcal{G} = (\mathcal{V},\mathcal{E})$
\begin{equation} \label{dtdlds}
x_{t+1} = Ax_t + B_S u_t, \quad t = 0,...,T,
\end{equation}
where $x_t \in \mathbf{R}^n$ is the system state at time $t$, $u_t \in \mathbf{R}^{ |S| }$ is the input at time $t$, and $A$ is the network dynamics matrix, which encodes the weighted connections in the underlying graph $\mathcal{G}$ and we assume to be invertible throughout. Let $\mathcal{B} = \{ b_1,..., b_M \}$ be a finite set of $n$-dimensional column vectors associated with possible locations for actuators that could be placed in the network to affect the dynamics of nodes in the graph. For any subset $S \subset \mathcal{B}$, the input matrix $B_S$ comprises the columns indexed by $S$, i.e., $B_S = [b_{s_1}, ..., b_{s_{|S|}}] \in \mathbf{R}^{n \times |S|}. $

We first consider an optimal open-loop linear quadratic regulator performance index associated with an input sequence $\mathbf{u} = [u_0^T,...,u_{T-1}^T]^T$. The optimal cost function is 
$$ V_{LQR}^*(S,x_0) = \min_\mathbf{u}  \sum_{t=0}^{T-1} (x_t^T Q x_t + u_t^T R_S u_t) + x_T^T Q_T x_T, $$ 
where $Q \succeq 0$ and $Q_T \succ 0$ are state and terminal cost matrices and $R_S \succ 0$ is an input cost matrix associated with actuator subset $S$.
This standard least squares problem has the solution
\begin{equation}
\begin{aligned} \label{openloopcost}
V_{LQR}^*(S, x_0) &= x_0^T  \underbrace{G^T (I +  H  \mathbf{B}_S \mathbf{B}_S^T  H^T)^{-1} G}_{P_0} x_0
\end{aligned}
\end{equation}
where $$H = \text{diag}(Q^{\frac{1}{2}})  \left[\begin{array}{ccccc}0 & 0 & 0 & \cdots & 0 \\I & 0 & 0 & \cdots & 0 \\A & I & 0 & \cdots & 0 \\A^2 & A & I & \ddots & 0 \\\vdots & \vdots & \vdots & \ddots & 0 \\A^{T-1} & A^{T-2} & \cdots & A & I\end{array}\right], \quad $$ $$ G = \text{diag}(Q^{\frac{1}{2}}) \left[\begin{array}{c}I \\A \\A^2 \\\vdots \\A^T\end{array}\right], \quad \mathbf{B}_S = \text{diag} (B_S R_S^{-\frac{1}{2}}). $$  
Alternatively, dynamic programming can be used to compute the optimal cost matrix $P_0$ via the backward Riccati recursion
\begin{equation} \label{riccatirecursion}
P_{t-1} = Q + A^T P_t A - A^T P_t B_S (R_S + B_S^T P_t B_S)^{-1} B_S^T P_t A,
\end{equation}
for $t = T,...,1$ with $P_T = Q_T$. The infinite horizon cost matrix $P$ can be computed from the limit of the recursion, resulting in the algebraic Riccati equation
\begin{equation} \label{riccatiequation}
P = Q + A^T P A - A^T P B_S (R_S + B_S^T P B_S)^{-1} B_S^T P A.
\end{equation}

The optimal cost function \eqref{openloopcost} quantifies feedback control performance as a function of the actuator subset and the initial state. Our performance bounds will be expressed in terms of worst-case and average values of this cost over initial states. In particular, we define
\begin{equation}
\begin{aligned}
\hat J_{LQR}(S) &= \max_{\Vert x_0 \Vert = 1} V^*(S, x_0) = \lambda_{max}(P_0) \\
 J_{LQR}^*(S) &= \mathbf{E}_{x_0} V^*(S, x_0) = \mathbf{tr} [P_0 X_0],
\end{aligned}
\end{equation}
where $\hat J_{LQR}(S)$ represents a worst-case cost and $ J_{LQR}^*(S)$ represents an average cost over a distribution of initial states with zero-mean and finite covariance $X_0$.

\textbf{Actuator selection.} The mappings $J_{LQR}^* : 2^\mathcal{B} \rightarrow \mathbf{R}$ and $\hat J_{LQR} : 2^\mathcal{B} \rightarrow \mathbf{R}$ shown above are set functions that map actuator subsets to optimal feedback control performance. We pose set function optimization problems to select a $k$-element subset of actuators to optimize control performance
\begin{equation}
\min_{S \subset V, \ |S| = k} \hat J_{LQR}(S), \quad \quad \min_{S \subset V, \ |S| = k} J_{LQR}^*(S). 
\end{equation}
Our performance bounds will also be expressed and interpreted in terms of actuator subset selections.

\subsection{Stochastic optimal control} \label{lqg_prob}
A more general for network dynamics is the stochastic linear system
\begin{equation} \label{dtstochlds}
x_{t+1} = Ax_t + B_S u_t + w_t, \quad t = 0,...,T,
\end{equation}
where $\{w_t\}$ is an identically and independently distributed Gaussian process with $w_t \sim \mathcal{N}(0, W)$ that models random disturbances that affect the network dynamics. 
For stochastic systems, we optimize expected cost over the set $\Pi$ of causal, measurable state feedback \emph{policies} $\pi: \mathbf{R}^n \rightarrow \mathbf{R}^m$
$$ V_{LQG}^*(S,x_0) = \min_{\pi \in \Pi} \mathbf{E}_w  \sum_{t=0}^{T-1} (x_t^T Q x_t + u_t^T R_S u_t) + x_T^T Q_T x_T. $$ 
Via dynamic programming, the optimal cost function is quadratic and given by
\begin{equation} \label{finitelqg}
V_{LQG}^*(S,x_0) = x_0^T P_0 x_0 + \sum_{t=1}^T \mathbf{tr} P_t W,
\end{equation}
where the $P_t$ for $t=0,...,T$ are generated by \eqref{riccatirecursion}, the same recursion as in the deterministic problem. 
In an infinite horizon setting, the steady state average stage cost is given by
\begin{equation} \label{inflqg}
J^\infty_{LQG}(S) = \mathbf{tr}(PW),
\end{equation}
where $P$ is the positive semidefinite solution of \eqref{riccatiequation}. As before, we can also define worst-case and average values for \eqref{finitelqg} and \eqref{inflqg} and pose corresponding set function optimization problems for actuator selection. Here, the optimal cost depends on both the initial state distribution and the disturbance distribution.

\section{Bounds on optimal feedback control performance}
We now develop a set of complementary bounds on the optimal feedback control performance in networks as a function of the system dynamics and the actuator subset $S$. We start with a worst-case lower bound for the best possible actuator subset selection for unstable networks. This result shows that the optimal cost can be exponentially large even with the best fixed-size set of actuators. We then derive a worst-case upper bound for the worst possible actuator subset selection for stable networks. This result shows that even the worst set of actuators cannot have arbitrarily bad performance. Our results are inspired by bounds for the controllability Gramian \cite{pasqualetti2014controllability} and an analogous bound for the Kalman filter in the context of sensor selection for state estimation \cite{zhang2017sensor}. 

\subsection{Performance bound for unstable network dynamics}
We begin with the following performance bound on optimal feedback control of networks with unstable open-loop network dynamics. To simplify the exposition, we will assume throughout this subsection that $\mathcal{B} = \{e_1,...,e_n \}$, the canonical set of unit vectors (i.e., each input signal affects the dynamics of a single node), and that $R_S = I$, $\forall S$. However, it is straightforward to generalize the results to arbitrary input vectors and cost matrices. We focus here on the infinite horizon cost given by the algebraic Riccati equation \eqref{riccatiequation}.
\begin{theorem} \label{thm:unstablebound}
Consider a network $\mathcal{G} = (\mathcal{V},\mathcal{E})$ with dynamics matrix $A$ and input set $S \subset \mathcal{B}$. Suppose that $A$ is Schur unstable and let $\lambda_{max}(A) > 1$ denote the eigenvalue of $A$ with maximum magnitude. Suppose further that $A$ is diagonalizable by the eigenvector matrix $V$, and for any $\eta \in (1, \lambda_{max}(A)]$ define
$$ \bar n =  | \{ \lambda : \lambda \in \text{spec}(A), |\lambda| \geq \eta \} |. $$
For all $\eta \in (1, \lambda_{max}(A)]$ and for any $Q \succeq 0$ such that $(A, Q^{\frac{1}{2}})$ is detectable, it holds 
\begin{equation} \label{unstablebound}
\lambda_{max}(P) \geq \text{cond}^{-2}(V) \frac{\eta^2 - 1}{\eta^2} \eta^{2(\frac{\bar n}{|S|} - 1)},
\end{equation}
where $P$ is the optimal closed-loop cost matrix that satisfies the algebraic Riccati equation \eqref{riccatiequation}.
\end{theorem}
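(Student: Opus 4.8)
\emph{Proof plan.} The plan is to read $x_0^{\top} P x_0$ as the optimal infinite-horizon cost from $x_0$ and to show it is forced to be large when $x_0$ is aligned with the dominant eigenvectors of $A$, because steering the expanding modes back down is expensive when actuators are few (by LQR--Kalman duality this is the transpose of the filtering bound in \cite{zhang2017sensor}, but I would argue it directly). Diagonalize $A = V \Lambda V^{-1}$ and partition $\Lambda = \text{diag}(\Lambda_1, \Lambda_2)$ so that $\Lambda_1 \in \mathbf{C}^{\bar n \times \bar n}$ carries exactly the eigenvalues with $|\lambda| \geq \eta$; let $V_1$ be the corresponding columns of $V$ and $\hat B_1$ the corresponding $\bar n$ rows of $V^{-1} B_S$. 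I would take $x_0 = V_1 w$ with $\|w\| = 1$ to be chosen, so that in eigencoordinates the dominant component obeys $z^{(1)}_{t+1} = \Lambda_1 z^{(1)}_t + \hat B_1 u_t$ with $z^{(1)}_0 = w$. We may assume $(A, B_S)$ is stabilizable, since otherwise the cost is $+\infty$ and \eqref{unstablebound} is vacuous; then the stabilizing solution $P$ of \eqref{riccatiequation} gives the optimal cost, $x_0^{\top} P x_0 = \sum_{t \geq 0}\big((x^*_t)^{\top} Q x^*_t + \|u^*_t\|^2\big)$, attained by a square-summable control $\{u^*_t\}$ driving $x^*_t \to 0$, hence $z^{(1)}_t \to 0$.

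Since $Q \succeq 0$, we have $x_0^{\top} P x_0 \geq \sum_{t \geq 0} \|u^*_t\|^2$. Because $\Lambda_1$ is expanding and $z^{(1)}_t \to 0$, unrolling the recursion gives $w = -\sum_{t \geq 0} \Lambda_1^{-(t+1)} \hat B_1 u^*_t$, so $\{u^*_t\}$ is an $\ell^2$ input steering $w$ to the origin under the Schur-stable reversed dynamics $(\Lambda_1^{-1}, \Lambda_1^{-1}\hat B_1)$; any such input has energy at least $w^{*}\mathcal{W}^{-1}w$, where $\mathcal{W} = \sum_{i \geq 1} \Lambda_1^{-i}\hat B_1 \hat B_1^{*}(\Lambda_1^{-i})^{*}$ is the controllability Gramian of that pair (if that pair is uncontrollable, some unstable mode of $A$ is uncontrollable, contradicting stabilizability). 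Choosing $w$ to be a unit least-eigenvector of $\mathcal{W}$ gives $x_0^{\top} P x_0 \geq 1/\lambda_{min}(\mathcal{W})$, and since $\|x_0\| = \|V_1 w\| \leq \|V_1\|_2$, we obtain $\lambda_{max}(P) \geq \big(\|V_1\|_2^2\, \lambda_{min}(\mathcal{W})\big)^{-1}$.

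The crux, and the step I expect to be the main obstacle, is bounding $\lambda_{min}(\mathcal{W})$ from above; this is where the exponential factor enters, in the spirit of the Gramian bounds of \cite{pasqualetti2014controllability}. Put $k' = \lceil \bar n/|S| \rceil - 1$, so that the $k'|S| < \bar n$ columns of $[\Lambda_1^{-1}\hat B_1, \dots, \Lambda_1^{-k'}\hat B_1]$ fail to span $\mathbf{C}^{\bar n}$; choose a unit $v$ orthogonal to all of them. Then $\lambda_{min}(\mathcal{W}) \leq v^{*}\mathcal{W}v = \sum_{i \geq k'+1}\|v^{*}\Lambda_1^{-i}\hat B_1\|^2 \leq \|\hat B_1\|_2^2 \sum_{i \geq k'+1}\eta^{-2i} = \|\hat B_1\|_2^2\,\eta^{-2k'}/(\eta^2-1) \leq \|\hat B_1\|_2^2\,\eta^{-2(\bar n/|S|-1)}/(\eta^2-1)$, using $\|\Lambda_1^{-i}\|_2 \leq \eta^{-i}$ and $k' \geq \bar n/|S| - 1$. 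Combining this with the bound from the previous paragraph and with $\|V_1\|_2 \leq \|V\|_2$, $\|\hat B_1\|_2 \leq \|V^{-1}\|_2\|B_S\|_2 = \|V^{-1}\|_2$ (the columns of $B_S$ being orthonormal canonical vectors) yields $\lambda_{max}(P) \geq (\eta^2-1)\eta^{2(\bar n/|S|-1)}/\big(\|V\|_2^2\|V^{-1}\|_2^2\big) = \text{cond}^{-2}(V)(\eta^2-1)\eta^{2(\bar n/|S|-1)}$, which implies \eqref{unstablebound} (the extra $\eta^{-2}$ in the statement leaves room for the bookkeeping needed when $A$ has complex eigenvalues and when $Q$ is only semidefinite, handled respectively by working with complexified states and by invoking the detectability hypothesis to guarantee the stabilizing $P$ exists and the optimal trajectory decays).
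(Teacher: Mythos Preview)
Your argument is correct and takes a genuinely different route from the paper's. The paper proceeds algebraically: it applies the Woodbury identity to the Riccati recursion, lets $Q\to 0$ so that $P^{-1}$ satisfies a Lyapunov recursion, identifies $P^{-1}+B_SB_S^{\top}$ with the controllability Gramian of the pair $(A^{-1},B_S)$, invokes Theorem~3.1 of \cite{pasqualetti2014controllability} as a black box to bound $\lambda_{\min}$ of that Gramian, and finally appeals to a comparison lemma to pass from small $Q$ to arbitrary detectable $Q$. Your route is variational and self-contained: you lower-bound $x_0^{\top}Px_0$ by the control energy, project onto the unstable eigenspace, recognize the minimum-energy problem for the reversed stable pair $(\Lambda_1^{-1},\Lambda_1^{-1}\hat B_1)$, and then reprove the Pasqualetti-style rank-deficiency bound directly for that restricted Gramian. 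The paper's approach is modular---it ports any improvement of the cited Gramian bound immediately---and handles the $Q\to 0$ and general-$Q$ cases in two clean steps. Your approach is more transparent about \emph{which} initial conditions are hard (those in the span of $V_1$ aligned with the least-controllable direction of $\mathcal W$), avoids the limiting argument in $Q$ and the external citation, and in fact yields the constant $(\eta^2-1)$ rather than $(\eta^2-1)/\eta^2$, so the ``bookkeeping slack'' you mention is not actually needed; your parenthetical about complexification and detectability already covers the only genuine technicalities.
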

\begin{proof}
We first make a connection between the optimal cost matrix for small $Q$ and a controllability Gramian associated with the \emph{inverse} of the dynamics matrix. 
Applying the Woodbury matrix identity to the Riccati recursion \eqref{riccatirecursion} yields
\begin{equation}
P_{t-1} = Q + A^T (P_t^{-1} + B_S R_S^{-1} B_S^T)^{-1} A.
\end{equation}
As $Q \rightarrow 0$ the inverse cost matrix satisfies
\begin{equation}
P_{t-1}^{-1} = A^{-1} (P_t^{-1} + B_S R_S^{-1} B_S^T) A^{-T}.
\end{equation}
Defining $X_{T-t} = P_t^{-1} + B_S B_S^T$, setting $R_S=I$, and rearranging, we obtain the recursion
\begin{equation} \label{grammian1}
X_{\tau + 1} = A^{-1} X_\tau A^{-T} + B_S B_S^T, \quad \tau=0,...,T-1
\end{equation}
with $X_0 = P_T^{-1} + B_S B_S^T = Q_T^{-1} + B_S B_S^T$.  This gives 
\begin{equation} \label{grammian2}
X_T = \underbrace{\sum_{\tau = 0}^{T-1} (A^{-1})^\tau B_S B_S^T  (A^{-T})^\tau}_{\bar X_T}  + (A^{-1})^T Q_T^{-1} (A^{-T})^T.
\end{equation}
We see that $\bar X_T$ is the $T$-stage controllability Gramian associated with the system $(A^{-1}, B_S)$. Then directly applying Theorem 3.1 of \cite{pasqualetti2014controllability}, for any $\mu \in [\lambda_{min}(A^{-1}), 1)$ and any $T \in [1,\infty)$  it holds that 
\begin{equation} \label{grammianbound}
\lambda_{min}(\bar X_T) \leq \text{cond}^2(V) \frac{ \mu^{2(\frac{\bar n}{|S|} - 1)}}{1-\mu^2} 
\end{equation}
where 
$$\bar n =  | \{ \lambda : \lambda \in \text{spec}(A^{-1}), |\lambda| \leq \mu \} |.$$ Defining $ \eta = 1/\mu$, we see that 
$$\bar n =  | \{ \lambda : \lambda \in \text{spec}(A), |\lambda| \geq \eta \} |$$
and $\eta \in (1, \lambda_{max}(A)]$. Since $P_0^{-1} = X_T - B_S B_S^T$, it follows that $\lambda_{min}(P_0^{-1}) \leq \lambda_{min}(X_T)$. Since $A$ has at least one unstable eigenvalue, then $A^{-1}$ has at least one stable eigenvalue, and in this direction the minimum eigenvalue of the second term in \eqref{grammian2} approaches zero as $T \rightarrow \infty$ for any fixed $Q_T \succ 0$, so that $\lim_{T\rightarrow \infty} \lambda_{min}(X_T) = \lambda_{min}(\bar X_T)$. Thus from \eqref{grammianbound} we have in the limit as $T \rightarrow \infty$
\begin{equation}
\begin{aligned}
\lambda_{max}(P) &\geq 1/\lambda_{min}( \bar X_T) \\
& \geq \text{cond}^{-2}(V)(1-\mu^2)  \mu^{-2(\frac{\bar n}{|S|} - 1)}
\end{aligned}
\end{equation}
Substituting $\mu = 1/\eta$ yields the expression \eqref{unstablebound}.

Finally, this analysis for small $Q$ accounts only for input energy costs and not for state regulation costs. It is clear from the structure of the recursion \eqref{riccatirecursion} (and from standard comparison lemmas; see, e.g., Chapter 13 in \cite{lancaster1995algebraic}) that for any $Q \succeq 0$ such that $(A, Q^{\frac{1}{2}})$ is detectable the costs can only increase. In particular, if $P^{Q\rightarrow 0}$ denotes the solution to \eqref{riccatiequation} for small $Q$ and $P^{Q}$ the solution for any $Q \succeq 0$ such that $(A, Q^{\frac{1}{2}})$ is detectable, then $P^{Q} \succeq P^{Q\rightarrow 0}$. Thus, the bound remains valid for any such choice of $Q$.
\end{proof}

\textbf{Discussion.} Although our result is inspired by and utilizes a bound on the minimum eigenvalue of the controllability Gramian in \cite{pasqualetti2014controllability}, we emphasize that it is not a trivial inversion of their bound. The Gramian quantifies input energy required for state transfer from the origin, so that a limiting feature of the dynamics is stable modes. In contrast, the optimal cost matrix quantifies input energy and state regulation costs (to the origin) for feedback control, and a limiting feature of the dynamics is unstable modes. Of course, this is as expected, but one arrives at significantly different conclusions about how easy or difficult it is to control a network, depending on which quantitative notion of network controllability is used. Our bound involves a fundamental \emph{closed-loop, feedback} notion of controllability. 

The bound expresses a fundamental performance limitation for feedback control of networks with unstable dynamics. Specifically, if the number of unstable modes grows, then the feedback control costs increase exponentially for any fixed-size set of actuators, even if they are optimally placed in the network. An immediate corollary (cf. Corollary 3.3 in \cite{pasqualetti2014controllability}) is that in order to guarantee a bound on the optimal control cost, the number of actuators must be a linear function of the number of unstable modes, even though a single actuator may suffice to stabilize the network dynamics in theory. As in \cite{pasqualetti2014controllability} and as we will see in our numerical experiments, the bound is loose in many cases, so that very large costs can be incurred even with a small number of unstable modes.

There are several ways the bound might be improved. It only accounts for the number of actuators, and not how effectively they control crucial state space dynamics. It could be improved, for example, by incorporating the angles that the input vectors make with the left eigenvectors of the dynamics matrix. Furthermore, the bound excludes the contribution of state regulation costs, so a sharper bound could be developed that includes and distinguishes both. It would also be interesting to explore possible connections with classical frequency domain performance limitations, such as Bode sensitivity theorems.


We conclude this subsection with a corollary that expresses a simplified bound for symmetric networks.
\begin{corollary}
Consider a network $\mathcal{G} = (\mathcal{V},\mathcal{E})$ with dynamics matrix $A$ and input set $S \subset \mathcal{B}$. Suppose that $A$ is Schur unstable and symmetric. Let $\lambda_{max}(A) > 1$ denote the eigenvalue of $A$ with maximum magnitude and $\bar \lambda_{u}(A) > 1$ denote the unstable eigenvalue of $A$ with minimum magnitude. For any $Q \succeq 0$ such that $(A, Q^{\frac{1}{2}})$ is detectable, it holds 
\begin{equation}
\begin{aligned}
\lambda_{max}(P) \geq \max & \left\{\frac{\lambda_{max}(A)^2 - 1}{\lambda_{max}(A)^2}, \right. \\ 
& \quad \left. \frac{\bar \lambda_{u}(A)^2 - 1}{\bar \lambda_{u}(A)^2} \bar \lambda_{u}(A)^{2(\frac{\bar n}{|S|} - 1)}  \right\} .
\end{aligned}
\end{equation}
\end{corollary}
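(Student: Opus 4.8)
The plan is to read off both terms of the maximum from the proof of Theorem~\ref{thm:unstablebound}, exploiting that a real symmetric $A$ is orthogonally diagonalizable: its eigenvector matrix $V$ may be taken orthogonal, so $\text{cond}(V) = 1$ and the prefactor $\text{cond}^{-2}(V)$ in \eqref{unstablebound} equals $1$.

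The second term is then immediate. Instantiating Theorem~\ref{thm:unstablebound} with $\eta = \bar\lambda_u(A)$, the set $\{\lambda \in \text{spec}(A) : |\lambda| \geq \eta\}$ becomes exactly the set of unstable eigenvalues, so $\bar n$ is the number of unstable modes, and since $\text{cond}^{-2}(V) = 1$ inequality \eqref{unstablebound} reads $\lambda_{max}(P) \geq \frac{\bar\lambda_u(A)^2 - 1}{\bar\lambda_u(A)^2}\,\bar\lambda_u(A)^{2(\bar n/|S| - 1)}$, which is the second term.

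The first term needs a short extra step, because instantiating Theorem~\ref{thm:unstablebound} at $\eta = \lambda_{max}(A)$ leaves the factor $\lambda_{max}(A)^{2(\bar n/|S| - 1)}$, where $\bar n$ is now only the multiplicity of the dominant eigenvalue, and this factor drops below $1$ whenever $\bar n < |S|$. Instead I would sharpen the Gramian estimate inside the proof of Theorem~\ref{thm:unstablebound} by projecting onto the single dominant eigendirection rather than invoking the subspace bound of \cite{pasqualetti2014controllability}. Let $v$ be a unit eigenvector with $Av = \lambda_{max}(A) v$; since $A = A^T$ we also have $A^{-1}v = \lambda_{max}(A)^{-1} v$. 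Evaluating the Rayleigh quotient of $X_T$ from \eqref{grammian2} along $v$ gives
$$v^T X_T v = \|B_S^T v\|^2 \sum_{\tau=0}^{T-1}\lambda_{max}(A)^{-2\tau} + \lambda_{max}(A)^{-2T}\, v^T Q_T^{-1} v \leq \frac{1}{1 - \lambda_{max}(A)^{-2}} + \lambda_{max}(A)^{-2T}\, v^T Q_T^{-1} v,$$
where the inequality uses $\|B_S^T v\|^2 = \sum_{s \in S} v_s^2 \leq \|v\|^2 = 1$ because the columns of $B_S$ are distinct canonical unit vectors and $R_S = I$. Since $P_0^{-1} = X_T - B_S B_S^T \preceq X_T$ gives $\lambda_{min}(P_0^{-1}) \leq \lambda_{min}(X_T) \leq v^T X_T v$, passing to the limit $T \to \infty$ as in the proof of Theorem~\ref{thm:unstablebound} (the residual term vanishes since $\lambda_{max}(A) > 1$) yields $\lambda_{min}(P^{-1}) \leq \lambda_{max}(A)^2/(\lambda_{max}(A)^2 - 1)$, hence $\lambda_{max}(P^{Q\to 0}) \geq (\lambda_{max}(A)^2 - 1)/\lambda_{max}(A)^2$. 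The comparison step from the end of that proof ($P^{Q} \succeq P^{Q\to 0}$ whenever $(A, Q^{1/2})$ is detectable) extends this to arbitrary admissible $Q$. Keeping the larger of the two lower bounds completes the argument.

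The only real obstacle is recognizing that the first term is not a verbatim specialization of Theorem~\ref{thm:unstablebound} and must be obtained by combining the $\text{cond}(V) = 1$ simplification with a one-dimensional Rayleigh-quotient estimate along the dominant eigenvector; once that is seen, the symmetry hypothesis makes the estimate (and the bound $\|B_S^T v\| \leq 1$) routine, and all of the limiting and monotonicity steps carry over unchanged from the proof of the theorem.
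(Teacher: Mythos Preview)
Your argument is correct. The second term is handled exactly as in the paper: specialize Theorem~\ref{thm:unstablebound} to $\eta=\bar\lambda_u(A)$ and use $\text{cond}(V)=1$ from symmetry. For the first term, the paper takes a slightly different route: it bounds the Gramian $\bar X_T$ from above by the full-actuation Gramian $\bar X_{T,\mathcal{B}}$ (using $B_S B_S^T \preceq I$), then computes $\lambda_{min}(\bar X_{T,\mathcal{B}})=\lambda_{min}\bigl(\sum_\tau A^{-2\tau}\bigr)$ explicitly for symmetric $A$, obtaining the same limit $\lambda_{max}(A)^2/(\lambda_{max}(A)^2-1)$. Your Rayleigh-quotient argument along the dominant eigenvector is equivalent---the inequality $\|B_S^T v\|^2\le 1$ is exactly the projection of $B_S B_S^T\preceq I$ onto $v$, and the minimum of $\sum_\tau A^{-2\tau}$ is attained at that same $v$---so the two derivations differ only in the order of ``project onto $v$'' and ``replace $B_S B_S^T$ by $I$''. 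The paper's version has the mild advantage of a clean control-theoretic reading (any actuator subset is no better than full actuation), while yours makes the role of the dominant mode more explicit.
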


\begin{proof}
To obtain the bound for the first term, consider the controllability Gramian $\bar X_T$ relating to the inverse cost matrix for small $Q$ in \eqref{grammian2}. Let $\bar X_{T,\mathcal{B}}$ be the Gramian for $S=\mathcal{B}$. Since $\bar X_T \preceq \bar X_{T,\mathcal{B}}$, it follows that $\lambda_{min}(\bar X_T) \leq \lambda_{min}(\bar X_{T,\mathcal{B}})$. We then have
\begin{equation}
\begin{aligned}
\lambda_{min}(\bar X_{T,\mathcal{B}}) = \lambda_{min} \left( \sum_{\tau=0}^{T-1} A^{-2\tau} \right) = \frac{1 - \lambda_{min}(A^{-1})^{2T}}{1 -  \lambda_{min}(A^{-1})^{2}} \\
\Rightarrow \lim_{T\rightarrow \infty} \lambda_{min}(\bar X_{T,\mathcal{B}}) = \frac{\lambda_{max}(A)^2}{\lambda_{max}(A)^2 - 1}.
\end{aligned}
\end{equation}
The first part then follows since as $T \rightarrow \infty$ we have $\lambda_{max}(P) \geq 1/\lambda_{min}(\bar X_T) \geq 1/\lambda_{min}(\bar X_{T,\mathcal{B}})$. The bound for the second term follows from Theorem \ref{thm:unstablebound} with $\eta = \bar  \lambda_{u}(A) > 1$ and since the symmetric dynamics matrix admits an orthonormal eigenvector matrix $V$ with $\text{cond}(V) =1$.
\end{proof}

\subsection{Performance bound for stable network dynamics}
Next we derive a complementary performance bound for stable network dynamics. It establishes a worst case performance bound for actuator subsets produced by any selection algorithm and quantifies how the difference between the best and worst possible actuator subsets depends on the network dynamics. This analysis is inspired by analogous results for sensor selection in the context of a state estimation metric involving the Kalman filtering error covariance matrix \cite{zhang2017sensor}. We focus here on the infinite horizon cost given by the solution to the algebraic Riccati equation \eqref{riccatiequation}, though it is also straightforward to derive for finite horizon costs.

We consider the following ratio
\begin{equation}
r(P) = \frac{\mathbf{tr}(P_{worst})}{\mathbf{tr}(P_{opt})},
\end{equation}
where $P_{worst}$ and $P_{opt}$ are the solutions to the algebraic Riccati equation \eqref{riccatiequation} corresponding to the optimal and worst $k$-element selection of actuators.

Analogous to the sensor information matrix defined in \cite{zhang2017sensor}, we also define the following \emph{actuator influence matrix} corresponding to an actuator subset $S \subseteq \mathcal{B}$
\begin{equation}
R(S) := B_S R_S^{-1} B_S^T.
\end{equation}

To prove the result, we will utilize the following lemmas.
\begin{lemma}[\cite{komaroff1994iterative}] \label{riccineq}
The solution $P \succeq 0$ of \eqref{riccatiequation} with $Q \succ 0$ satisfies $P \succeq A^T(Q^{-1} + R(S))^{-1} A + Q$.
\end{lemma}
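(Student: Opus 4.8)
The plan is to rewrite the algebraic Riccati equation \eqref{riccatiequation} in an equivalent, more convenient form using the Woodbury matrix identity, and then run a short bootstrapping argument based on operator monotonicity of the matrix inverse. First I would factor $A^T$ on the left and $A$ on the right in \eqref{riccatiequation} and apply Woodbury to the middle term: $P - P B_S(R_S + B_S^T P B_S)^{-1} B_S^T P = (P^{-1} + B_S R_S^{-1} B_S^T)^{-1} = (P^{-1} + R(S))^{-1}$. This puts the Riccati equation in the form $P = Q + A^T(P^{-1} + R(S))^{-1} A$, which already exposes the structure of the desired inequality.

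From this form, since $A^T(P^{-1}+R(S))^{-1}A \succeq 0$, I get $P \succeq Q \succ 0$ at once; in particular $P$ is invertible. Operator antitonicity of $X \mapsto X^{-1}$ on the positive definite cone then gives $P^{-1} \preceq Q^{-1}$, hence $P^{-1} + R(S) \preceq Q^{-1} + R(S)$, and applying antitonicity once more yields $(P^{-1}+R(S))^{-1} \succeq (Q^{-1}+R(S))^{-1}$. Congruence by $A$ preserves the ordering, so $P = Q + A^T(P^{-1}+R(S))^{-1}A \succeq Q + A^T(Q^{-1}+R(S))^{-1}A$, which is exactly the claimed bound.

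The argument reduces to a one-step bootstrap once the Woodbury rewriting is in place, so there is no real obstacle; the only things needing care are (i) verifying that $Q^{-1}+R(S)$ and $P^{-1}+R(S)$ are genuinely invertible, which holds because each is bounded below by the positive definite matrix $Q^{-1}$ (respectively $P^{-1}$), and (ii) using the correct direction of monotonicity, since it is precisely the antitonicity of the inverse that lets $P \succeq Q$ propagate through to $(P^{-1}+R(S))^{-1} \succeq (Q^{-1}+R(S))^{-1}$. One could alternatively obtain the same estimate directly from the backward recursion \eqref{riccatirecursion} by induction together with a standard comparison lemma, but the fixed-point argument above is the cleanest route.
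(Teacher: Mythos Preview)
The paper does not prove this lemma at all; it is quoted from \cite{komaroff1994iterative} and used as a black box in the proof of Theorem~\ref{thm:stablebound}. So there is no in-paper argument to compare against, and your proposal stands on its own merits.

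Your argument is correct in substance, and the bootstrap via the rewritten equation $P = Q + A^T(P^{-1}+R(S))^{-1}A$ together with operator antitonicity of the inverse is exactly the right mechanism. There is, however, a small circularity you should clean up: you invoke the Woodbury form---which requires $P$ invertible---and then use that form to deduce $P \succeq Q \succ 0$. To break the loop, first observe directly from the unrewritten Riccati equation that
\[
P = Q + A^T\bigl[P - P B_S(R_S + B_S^T P B_S)^{-1}B_S^T P\bigr]A,
\]
and that the bracketed term equals $P^{1/2}(I + P^{1/2}B_S R_S^{-1}B_S^T P^{1/2})^{-1}P^{1/2} \succeq 0$ for any $P \succeq 0$ (no invertibility of $P$ needed). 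This already gives $P \succeq Q \succ 0$, after which your Woodbury step and the monotonicity chain $P \succeq Q \Rightarrow (P^{-1}+R(S))^{-1} \succeq (Q^{-1}+R(S))^{-1}$ go through exactly as written. With that one-line patch, the proof is complete.
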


\begin{lemma}[\cite{horn1985matrix}] \label{traceeigineq}
For symmetric matrices $Y,Z \in \mathbf{R}^{n \times n}$, there holds $\lambda_n(Y+Z) \geq \lambda_n(Y) + \lambda_n(Z)$, $\lambda_1(Y+Z) \leq \lambda_1(Y) + \lambda_1(Z)$, and $\lambda_n(Y) \mathbf{tr}(Z) \leq \mathbf{tr}(YZ)  \leq \lambda_1(Y) \mathbf{tr}(Z)$. 
\end{lemma}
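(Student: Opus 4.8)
The plan is to obtain all three inequalities from the variational (Courant--Fischer) characterization of the extreme eigenvalues of a real symmetric matrix. Since $Y$ and $Z$ are symmetric, I read $\lambda_n(\cdot)$ and $\lambda_1(\cdot)$ as the smallest and largest eigenvalues and use the Rayleigh-quotient identities $\lambda_n(M) = \min_{\Vert x \Vert = 1} x^T M x$ and $\lambda_1(M) = \max_{\Vert x \Vert = 1} x^T M x$.

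For the two additive eigenvalue bounds I would argue directly from these formulas. For every unit vector $x$ we have $x^T(Y+Z)x = x^T Y x + x^T Z x \geq \lambda_n(Y) + \lambda_n(Z)$, since each summand is bounded below by the corresponding minimal eigenvalue; minimizing the left-hand side over unit $x$ yields $\lambda_n(Y+Z) \geq \lambda_n(Y) + \lambda_n(Z)$. The mirror-image computation, with $\max$ in place of $\min$ and the inequalities reversed, gives $\lambda_1(Y+Z) \leq \lambda_1(Y) + \lambda_1(Z)$. (These are just the two extreme instances of Weyl's inequalities, so one could alternatively simply cite Weyl.)

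For the trace sandwich I would spectrally decompose $Y = U \Lambda U^T$ with $U$ orthogonal and $\Lambda = \mathrm{diag}(\lambda_1(Y),\dots,\lambda_n(Y))$, set $\tilde Z = U^T Z U$, and compute $\mathbf{tr}(YZ) = \mathbf{tr}(\Lambda \tilde Z) = \sum_{i} \lambda_i(Y)\,\tilde Z_{ii}$, where $\sum_i \tilde Z_{ii} = \mathbf{tr}(\tilde Z) = \mathbf{tr}(Z)$. When $Z \succeq 0$ --- which holds in every application of the lemma here, where $Z$ is an actuator influence matrix $R(S)$ or a Riccati/covariance-type matrix --- each diagonal entry $\tilde Z_{ii} = e_i^T U^T Z U e_i$ is nonnegative, so $\mathbf{tr}(YZ)$ is a nonnegatively weighted combination of the numbers $\lambda_i(Y)$ with total weight $\mathbf{tr}(Z)$; replacing each $\lambda_i(Y)$ by $\lambda_n(Y)$ and then by $\lambda_1(Y)$ gives $\lambda_n(Y)\mathbf{tr}(Z) \leq \mathbf{tr}(YZ) \leq \lambda_1(Y)\mathbf{tr}(Z)$. (Equivalently, $\lambda_1(Y)I - Y \succeq 0$ and $Y - \lambda_n(Y)I \succeq 0$, and the trace of a product of two positive semidefinite matrices is nonnegative.)

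I do not expect a genuine obstacle: each step is a one-line manipulation. The only point requiring care is the positive semidefiniteness of $Z$ in the last inequality --- it is essential (a diagonal $2\times 2$ example with an indefinite $Z$ and $\mathbf{tr}(Z)=0$ already violates the sandwich) but is automatically satisfied wherever Lemma~\ref{traceeigineq} is invoked, since there $Z$ is always a Gram-type or influence matrix.
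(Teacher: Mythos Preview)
The paper does not prove this lemma; it simply cites \cite{horn1985matrix}. Your argument is the standard one and is correct: the eigenvalue bounds are the extreme cases of Weyl's inequalities, obtained exactly as you do from the Rayleigh-quotient characterization, and the trace sandwich follows by diagonalizing $Y$ and observing that $\mathbf{tr}(YZ)$ becomes a weighted average of the $\lambda_i(Y)$ with weights $\tilde Z_{ii}$.

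Your caveat about the trace inequality is well taken and worth stating plainly: as written in the lemma, the sandwich $\lambda_n(Y)\mathbf{tr}(Z)\le\mathbf{tr}(YZ)\le\lambda_1(Y)\mathbf{tr}(Z)$ is \emph{false} for arbitrary symmetric $Z$ (your indefinite, zero-trace counterexample suffices), and one genuinely needs $Z\succeq 0$ so that the weights $\tilde Z_{ii}$ are nonnegative. You are also right that every invocation in the paper has $Z$ a Lyapunov/Riccati solution or a Gram matrix of the form $M^TM$, so the hypothesis is satisfied throughout. It would be cleaner to record $Z\succeq 0$ explicitly in the lemma statement rather than leaving it implicit.
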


\begin{lemma}[\cite{liu2011open}] \label{atrans}
A square matrix $A \in \mathbf{R}^{n \times n}$ is Schur stable if and only if there exists a nonsingular matrix $T$ such that $\sigma_1(TAT^{-1}) < 1$. 
\end{lemma}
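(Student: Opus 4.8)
The plan is to prove the two directions separately: the ``if'' direction is immediate from the relationship between spectral radius and induced matrix norms, while the ``only if'' direction I would handle by exhibiting an explicit $T$ built from the solution of a discrete Lyapunov equation.

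For the ``if'' direction, I would use that the operator $2$-norm $\sigma_1(\cdot)$ is an induced matrix norm, so $\rho(M) \le \sigma_1(M)$ for every square matrix $M$; since eigenvalues are invariant under similarity, $\rho(A) = \rho(TAT^{-1}) \le \sigma_1(TAT^{-1}) < 1$, whence $A$ is Schur stable.

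For the ``only if'' direction, suppose $A$ is Schur stable. Then the discrete Lyapunov equation $A^T P A - P = -I$ has a unique solution $P \succ 0$ (explicitly $P = \sum_{k \ge 0} (A^T)^k A^k$). I would take $T = P^{1/2}$, the symmetric positive-definite square root, and compute
$$ (TAT^{-1})^T(TAT^{-1}) = P^{-1/2} A^T P A P^{-1/2} = P^{-1/2}(P - I)P^{-1/2} = I - P^{-1}, $$
using the Lyapunov identity $A^T P A = P - I$. Since $P \succ 0$ forces $P^{-1} \succ 0$, the right-hand side has largest eigenvalue $1 - \lambda_{min}(P^{-1}) < 1$, so $\sigma_1(TAT^{-1})^2 = \lambda_{max}(I - P^{-1}) < 1$, giving $\sigma_1(TAT^{-1}) < 1$, as required.

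The argument is short, so there is no real obstacle; the main thing to get right is the choice of $T$, and the Lyapunov construction conveniently yields a real, symmetric $T$. An alternative, more elementary route would be to conjugate the (real) Jordan form of $A$ by a diagonal scaling $D_\delta = \mathrm{diag}(1,\delta,\delta^2,\dots)$ that damps the superdiagonal couplings, so that the conjugated matrix is block upper bidiagonal with spectral norm at most $\rho(A) + \delta < 1$ for sufficiently small $\delta > 0$; this requires a minor case analysis over Jordan block sizes (including $2\times 2$ real blocks for complex-conjugate eigenvalues), which the Lyapunov approach avoids. I would therefore present the Lyapunov construction as the main argument.
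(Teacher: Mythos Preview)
Your argument is correct in both directions. The ``if'' direction is the standard fact that the spectral radius is dominated by any induced norm and is similarity-invariant. For the ``only if'' direction, your Lyapunov construction with $T = P^{1/2}$ is clean: with $T$ symmetric, $(TAT^{-1})^T(TAT^{-1}) = P^{-1/2}A^T P A P^{-1/2} = I - P^{-1}$, and since $P = \sum_{k\ge 0}(A^T)^k A^k \succeq I$ one has $0 \preceq I - P^{-1} \prec I$, giving $\sigma_1(TAT^{-1}) < 1$. (It is worth stating $P \succeq I$ explicitly so that $I - P^{-1} \succeq 0$ is visibly consistent with it being $M^T M$, though of course positive semidefiniteness of $M^T M$ is automatic.)

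As for comparison with the paper: the paper does not prove this lemma at all---it is simply quoted from \cite{liu2011open} as a known tool used in the proof of Theorem~\ref{thm:stablebound}. So there is nothing to compare against; your self-contained proof via the discrete Lyapunov equation is a standard and efficient way to establish the result, and your remark about the alternative Jordan-form-plus-diagonal-scaling route (which is the textbook proof that $\rho(A) = \inf_{\|\cdot\|} \|A\|$ over induced norms) is also apt.
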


Based on the similarity transformation $T$ in Lemma \ref{atrans}, we define a positive constant which will appear in our bound:
\begin{equation} \label{alphaa}
\alpha_A = \frac{\sigma_1^2(T)}{\sigma_n^2(T)(1 - \sigma^2_1(TAT^{-1}))}.
\end{equation}

\begin{theorem} \label{thm:stablebound}
Let $\mathcal{R} = \{ R(S) \mid S \subset \mathcal{B}, \ |S| \leq k \}$ be the set of all actuator influence matrices for actuator subsets with $k$ or fewer elements. Let $\lambda_1^{max} := \max\{ \lambda_1(R) \mid R \in \mathcal{R} \}$. Suppose the dynamics matrix $A$ is stable and $Q \succ 0$. Then the cost ratio satisfies
\begin{equation} \label{stablebound}
r(P) \leq \frac{\alpha_A (1 + \lambda_1^{\max} \lambda_n(Q)) \mathbf{tr}(Q)}{  \sigma_n^2(A) \lambda_n(Q) + (1 + \lambda_1^{\max} \lambda_n(Q))  \mathbf{tr}(Q) }
\end{equation}
\end{theorem}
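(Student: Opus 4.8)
The plan is to bound the numerator and denominator of $r(P)$ separately: a uniform upper bound on $\mathbf{tr}(P_{worst})$ that in fact holds for the Riccati solution of \emph{every} actuator subset, and a uniform lower bound on $\mathbf{tr}(P_{opt})$ obtained from Lemma \ref{riccineq}. Dividing the two bounds yields \eqref{stablebound}.

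For the upper bound I would exploit the fact that the optimal feedback cost can never exceed the cost of the trivial policy $u_t \equiv 0$. Under zero input the closed loop is $x_{t+1} = Ax_t$, whose infinite-horizon state cost from $x_0$ is $x_0^T \mathcal{L} x_0$, where $\mathcal{L} = \sum_{t \geq 0} (A^T)^t Q A^t$ is the unique solution of the Lyapunov equation $\mathcal{L} = Q + A^T \mathcal{L} A$, finite and positive definite because $A$ is Schur stable and $Q \succ 0$. Monotonicity of the Riccati map (the same comparison-lemma reasoning already invoked in the proof of Theorem \ref{thm:unstablebound}) then gives $P_S \preceq \mathcal{L}$ for the solution $P_S$ of \eqref{riccatiequation} associated with any $S$, hence $\mathbf{tr}(P_{worst}) \leq \mathbf{tr}(\mathcal{L})$. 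Using the cyclic property of the trace and the bound $\mathbf{tr}(YZ) \leq \lambda_1(Y)\mathbf{tr}(Z)$ from Lemma \ref{traceeigineq} with $Y = A^t(A^t)^T$, $Z = Q$, one gets $\mathbf{tr}((A^t)^T Q A^t) \leq \sigma_1^2(A^t)\mathbf{tr}(Q)$. To control $\sigma_1(A^t)$ I would invoke Lemma \ref{atrans}: writing $A^t = T^{-1}(TAT^{-1})^t T$ gives $\sigma_1(A^t) \leq \sigma_1(T)\sigma_1(T^{-1})\sigma_1^t(TAT^{-1}) = \tfrac{\sigma_1(T)}{\sigma_n(T)}\sigma_1^t(TAT^{-1})$, so that summing the geometric series in $\sigma_1^2(TAT^{-1}) < 1$ produces exactly $\mathbf{tr}(P_{worst}) \leq \alpha_A\,\mathbf{tr}(Q)$ with $\alpha_A$ as in \eqref{alphaa}.

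For the lower bound I would apply Lemma \ref{riccineq} to $P_{opt}$, giving $P_{opt} \succeq A^T(Q^{-1} + R(S_{opt}))^{-1} A + Q$, and take traces. Using the cyclic property again and the bound $\mathbf{tr}(YZ) \geq \lambda_n(Y)\mathbf{tr}(Z)$ from Lemma \ref{traceeigineq} with $Y = (Q^{-1}+R(S_{opt}))^{-1}$ and $Z = AA^T$, together with $\lambda_n\big((Q^{-1}+R(S))^{-1}\big) = \lambda_1(Q^{-1}+R(S))^{-1}$ and the eigenvalue-sum inequality $\lambda_1(Q^{-1}+R(S)) \leq \lambda_1(Q^{-1}) + \lambda_1(R(S)) \leq \lambda_n(Q)^{-1} + \lambda_1^{\max}$, and finally $\mathbf{tr}(AA^T) \geq \sigma_n^2(A)$, one obtains $\mathbf{tr}(P_{opt}) \geq \tfrac{\sigma_n^2(A)\lambda_n(Q)}{1 + \lambda_1^{\max}\lambda_n(Q)} + \mathbf{tr}(Q)$. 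Dividing the numerator bound by this and clearing the common factor $1 + \lambda_1^{\max}\lambda_n(Q)$ gives precisely \eqref{stablebound}.

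I expect the only genuine subtlety to be in the upper-bound step: one must carefully argue that the comparison policy $u \equiv 0$ is admissible and produces a finite infinite-horizon cost (this is exactly where Schur stability of $A$ is used) and that this, via monotonicity of the Riccati recursion, forces $P_{worst} \preceq \mathcal{L}$ uniformly over all $k$-element subsets. Everything else is a routine chain of Lemmas \ref{riccineq}--\ref{atrans} and elementary trace and singular-value manipulations.
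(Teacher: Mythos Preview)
Your proposal is correct and follows essentially the same approach as the paper: upper-bound $\mathbf{tr}(P_{worst})$ by the trace of the zero-input Lyapunov solution (shown to be at most $\alpha_A\,\mathbf{tr}(Q)$ via Lemma~\ref{atrans}), lower-bound $\mathbf{tr}(P_{opt})$ via Lemma~\ref{riccineq} together with the trace/eigenvalue inequalities of Lemma~\ref{traceeigineq}, and divide. The only cosmetic differences are that the paper manipulates the transformed Lyapunov equation $\bar P = D\bar P D^T + \bar Q$ directly rather than summing the series $\sum_t (A^T)^t Q A^t$, and in the lower-bound step it applies $\mathbf{tr}(YZ)\geq \lambda_n(Y)\mathbf{tr}(Z)$ with $Y=AA^T$ (picking up an extra factor $n$ that is then dropped in the stated bound) rather than with $Y=(Q^{-1}+R(S))^{-1}$ as you do.
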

\begin{proof}
Our proof follows along the lines of the analogous proof of Theorem 3 in \cite{zhang2017sensor}.
We begin by deriving an upper bound for $\mathbf{tr}(P_{worst})$, based on the fact that for stable systems the cost is finite even without any actuation. Specifically, with no actuators ($S = \emptyset$) the algebraic Riccati equation \eqref{riccatiequation} reduces to the Lyapunov equation
$$ P^\emptyset = A^T P^\emptyset A + Q. $$
Since $A$ is stable, from Lemma \ref{atrans} there exists a nonsingular similarity transformation $T$ satisfying $\sigma_1(TAT^{-1}) < 1$. Defining $\bar P = T P^\emptyset T^T$, $\bar Q = T Q T^T$, and $D = TAT^{-1}$, we have $\bar P = D \bar P D^T + \bar Q$. Using trace and eigenvalue interlacing properties for sums of symmetric matrices from Lemma \ref{traceeigineq}, there holds $ \mathbf{tr}(D \bar P D^T) =  \mathbf{tr}(D^T D \bar P) \leq \sigma_1^2(D) \mathbf{tr}(\bar P)$ so that 
$ \mathbf{tr}(\bar P) \leq \frac{\mathbf{tr}(\bar Q) }{1 - \sigma_1^2(D)}.$
Similarly, we have
$\mathbf{tr}(\bar P) = \mathbf{tr}(T^T T P^\emptyset) \geq \sigma^2_n(T) \mathbf{tr}(P^\emptyset)$ and $\mathbf{tr}(\bar Q) = \mathbf{tr}(T^T T Q) \leq \sigma_1^2(T) \mathbf{tr}(Q)$. Putting it all together yields
\begin{equation}
\begin{aligned}
\mathbf{tr}(P_{worst}) &\leq \mathbf{tr}(P^\emptyset) \\
& \leq \frac{\sigma_1^2(T)}{\sigma_n^2(T)} \frac{\mathbf{tr}(Q)}{1-\sigma^2_1(D)} = \alpha_A \mathbf{tr}(Q).
\end{aligned}
\end{equation}
where $\alpha_A$ is the constant defined in \eqref{alphaa}.

We now provide a lower bound for $P_{opt}$. For any $k$-element actuator subset $S$, there holds
\begin{equation}
\begin{aligned}
\mathbf{tr}(P) &\geq \mathbf{tr}(A^T (Q^{-1} + R(S) )^{-1} A + Q) \\
& \geq \lambda_n(A A^T) \mathbf{tr}(Q^{-1} + R(S) )^{-1} + \mathbf{tr}(Q) \\
& = \sigma_n^2(A) \sum_{i=1}^n \frac{1}{\lambda_i(Q^{-1} + R(S)) } + \mathbf{tr}(Q) \\
& \geq \frac{n \sigma_n^2(A) }{\lambda_1(Q^{-1} + R(S))} + \mathbf{tr}(Q) \\ 
& \geq \frac{n \sigma_n^2(A) }{\lambda_1(Q^{-1}) + \lambda_1(R(S))} + \mathbf{tr}(Q) \\
& \geq \frac{n \sigma_n^2(A) }{\frac{1}{\lambda_1(Q)} + \lambda_1^{max}} + \mathbf{tr}(Q).
\end{aligned}
\end{equation}
The first inequality follows from Lemma \ref{riccineq}, and the second and fourth from Lemma \ref{traceeigineq}. Since the bound above holds for any $k$-element actuator subset, it also holds for the optimal $k$-element selection.  

Finally, the bound \eqref{stablebound} is obtained by combining the upper bound for $\mathbf{tr}(P_{worst})$ and the lower bound for $\mathbf{tr}(P_{opt})$.
\end{proof}

We also state the following corollary, which provides a simplified bound for stable and normal dynamics matrices.
\begin{corollary} \label{stableboundcor}
If the system dynamics matrix $A$ is Schur stable, then $r(P) \leq \alpha_A$, where $\alpha_A$ is the constant defined in \eqref{alphaa} that depends only on the network dynamics matrix. Moreover, if $A$ is also normal, i.e., $A^TA = AA^T$, then 
\begin{equation} \label{stablesimplebound}
 r(P) \leq \frac{1}{1-\lambda_1^2(A)}.
 \end{equation}
\end{corollary}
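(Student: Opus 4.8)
The plan is to derive both claims directly from the bound \eqref{stablebound} of Theorem~\ref{thm:stablebound} by elementary manipulation; no new machinery is needed.

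For the first claim, I would note that the denominator on the right-hand side of \eqref{stablebound} is exactly the quantity $(1 + \lambda_1^{\max}\lambda_n(Q))\,\mathbf{tr}(Q)$ that multiplies $\alpha_A$ in the numerator, plus the extra term $\sigma_n^2(A)\,\lambda_n(Q)$. Since $Q \succ 0$ gives $\lambda_n(Q) > 0$ and $\sigma_n^2(A) \geq 0$, this extra term is nonnegative, so deleting it from the denominator can only increase the fraction. Cancelling the common positive factor $(1 + \lambda_1^{\max}\lambda_n(Q))\,\mathbf{tr}(Q)$ then leaves precisely $\alpha_A$, so $r(P) \leq \alpha_A$.

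For the second claim, the key observation is that $\alpha_A$ as defined in \eqref{alphaa}, and hence the bound of Theorem~\ref{thm:stablebound}, is valid for \emph{any} nonsingular $T$ with $\sigma_1(TAT^{-1}) < 1$ guaranteed by Lemma~\ref{atrans}, so I am free to choose $T$ conveniently. When $A$ is normal, $A^TA = AA^T$ forces the singular values of $A$ to coincide with the magnitudes of its eigenvalues, so $\sigma_1(A) = |\lambda_1(A)|$; since $A$ is Schur stable, $|\lambda_1(A)| < 1$, hence $\sigma_1(A) < 1$ and $T = I$ is an admissible choice. Substituting $T = I$ into \eqref{alphaa} gives $\sigma_1(T) = \sigma_n(T) = 1$ and $\sigma_1(TAT^{-1}) = \sigma_1(A) = |\lambda_1(A)|$, so $\alpha_A = 1/(1 - \lambda_1^2(A))$; combining this with the first claim yields \eqref{stablesimplebound}.

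The argument is essentially routine, so I do not anticipate a serious obstacle; the only points needing a little care are observing that the extra denominator term is genuinely nonnegative (which uses $Q \succeq 0$), making explicit that Theorem~\ref{thm:stablebound} holds for every admissible $T$ rather than one distinguished choice so that taking $T = I$ for normal $A$ is legitimate, and invoking the standard fact that normality implies $\sigma_i(A) = |\lambda_i(A)|$.
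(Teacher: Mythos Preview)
Your proposal is correct and matches the paper's own proof essentially line for line: both drop the nonnegative term $\sigma_n^2(A)\lambda_n(Q)$ from the denominator of \eqref{stablebound} to obtain $r(P)\le\alpha_A$, and then, for normal $A$, use $\sigma_1(A)=|\lambda_1(A)|<1$ to justify the choice $T=I$ in \eqref{alphaa}. Your write-up is in fact slightly more explicit than the paper's in noting that the bound is valid for \emph{any} admissible $T$, which is exactly what licenses the choice $T=I$.
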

\begin{proof}
Since the denominator in the bound \eqref{stablebound} is lower bounded by $ (1 + \lambda_1^{\max} \lambda_n(Q))  \text{trace}(Q)$, a looser bound $r(P) \leq \alpha_A$ is obtained that only depends on the system dynamics, and not on the cost matrix $Q$. In addition, if $A$ is normal, its singular values are equal to the magnitude of its eigenvalues \cite{horn1985matrix}, and since $A$ is Schur stable, we have $\sigma_1(A) = |\lambda_1(A)| < 1$. Further, the similarity transformation $T$ described in Lemma \ref{atrans} can be taken to be the identity matrix. Under these conditions, the bound reduces to \eqref{stablesimplebound}.
\end{proof}

\textbf{Discussion.} Although it is not surprising that such bounds should exist for stable networks, they provide insight into the properties of the dynamics matrix $A$ that affect the potential efficacy of actuator selection. The effect is most clearly seen in Corollary \ref{stableboundcor}, where we observe that the difference between worst and optimal increases as $A$ approaches instability, confirming intuition. The bounds complement those in the previous subsection: here, even the worst $k$-element actuator selection cannot have arbitrarily bad performance for stable networks, whereas even the best selection may incur large costs in unstable networks. However, even in stable networks, effective actuator set selections (perhaps obtained with greedy algorithms \cite{summers2016actuator}) can significantly improve feedback control costs. 


\section{Numerical Experiments}
We now illustrate our results with numerical experiments in regular and random graphs. To build insight and intuition, we focus some of our analysis on an undirected path network, with dynamics matrix 
$$A = \frac{\rho}{3}\left[\begin{array}{ccccc}1 & 1 & 0 & \cdots & 0 \\1 & 1 & 1 & \cdots & \vdots \\0 & 1 & \ddots & \ddots & 0 \\\vdots & \vdots & \ddots & 1 & 1 \\0 & \cdots & 0 & 1 & 1\end{array}\right],  $$
where $\rho > 0 $ is a parameter we will used to modulate the stability of the dynamics. Throughout this section we assume that $\mathcal{B} = \{ e_1,..., e_n\}$, so that each possible actuator injects an input into the dynamics of a single node, and that $Q=I$ and $R_S = I$, $\forall S$. Fig. 1 shows how the optimal feedback performance varies as the number of controlled nodes increases for a $50$-node path network, with varying network stability properties and actuators spaced evenly throughout the path, which is empirically a near optimal actuator placement. We see that when the network becomes unstable, the optimal feedback control costs increase significantly with only a single actuator, even though a single actuator is sufficient to stabilize the network dynamics. 


\begin{figure}
\includegraphics[scale = 0.45]{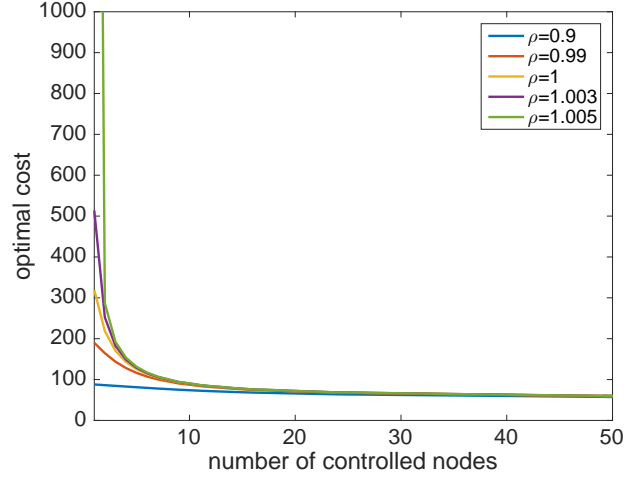}
\caption{Optimal cost versus the number of controlled nodes for a 50-node path graph. The controlled nodes were evenly spaced throughout the path. We see that when the dynamics are stable ($\rho=0.9,0.99,1$) the optimal cost is not too large, even with only a single controlled node. When the dynamics are unstable ($\rho=1.003,1.005$), the optimal cost can be very large.}
\label{fig:path}
\end{figure}



\begin{figure}[t]
\centering
\includegraphics[width=0.9\columnwidth]{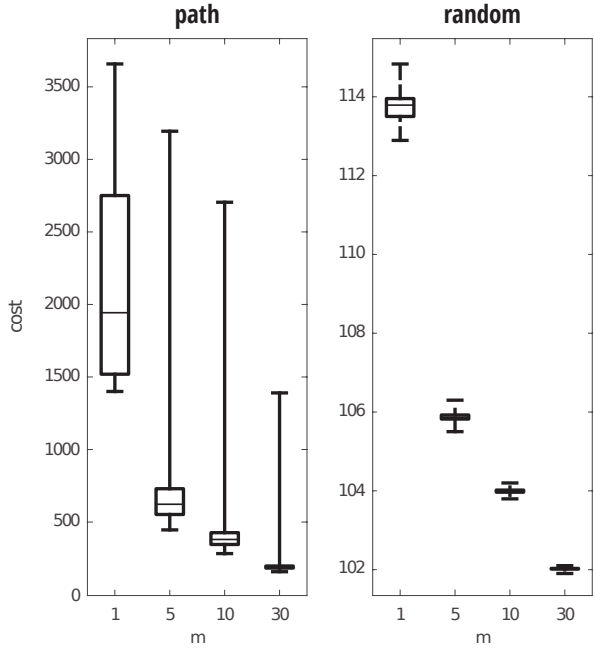}
\caption{For the path graph and Erdos-Renyi random graph ($p=0.1$) with $n=100$ nodes, $m\in\{1,5,10,30\}$ actuators were selected uniformly randomly. Each of the box plots represent a sample distribution of the costs of 1000 samples (realizations of $B$). These box plots demarcate the mean, first and third quartiles (box), and minimum and maximum (whiskers).} \label{fig:randomBs}
\end{figure}

The analytic expressions derived in this paper provide best and worst case cost bounds in different contexts. In Section III.A., for systems that are relatively difficult to regulate (i.e., they have at least one unstable mode), we derive a lower bound on the cost required to regulate the system to the origin for a fixed number of actuators. Similarly, in Section III.B., for systems that are relatively easy to regulate (i.e., all modes are stable), we identify an upper bound on the cost required to regulate the system to the origin for a fixed number of actuators. Effectively, when the system is inherently hard, we quantify the best case cost; when the system is inherently easy, we quantify the worst case cost. These relationships are informative because they reveal the scaling of cost based on the number of actuators. However, it is well-known that $m$ actuators from $n$ nodes can be selected in many ways and that these choices have different costs associated with them. Likewise, the directions associated with unstable modes can dominate the cost, and, therefore, the cost can vary depending on the exact initial state that is required to regulate to the origin, or on the disturbance covariance matrix in the stochastic control case. These questions of actuator selection and target regulation (target control) are not new, however, here we empirically demonstrate the types of variation we observe by using the generalized LQR cost (which has not been studied before).

We first address the variation in the cost for a fixed number of actuators $m$. We observe this variation by selecting $m$ nodes uniformly from $n$, constructing the matrix $B$ (such that the columns of $B$ are columns of the identity matrix), and calculating the LQR cost. We repeat this process 1000 times for each choice of $m\in\{1,5,10,30\}$, constructing the sample distributions in Fig. \ref{fig:randomBs} for the path graph with $n=100$ nodes presented earlier and for the Erdos-Renyi random graph ($p=0.1$). In both cases the adjacency matrix $A$ has been scaled by its largest eigenvalue to make it marginally stable. While the exponential scaling related to the number of actuators can still be observed clearly, there is significant variation in the cost for a specific choice of $m$, most notably for lower fractions of actuators. In addition, the denser connectivity of the random graph yields not only smaller costs, but also smaller variation due to selection of $B$. This implies that the actuator selection problem becomes trivial as the number of actuators or the connectivity increases because all choices will provide roughly equivalent costs.

\begin{figure}[t]
\includegraphics[width=\columnwidth]{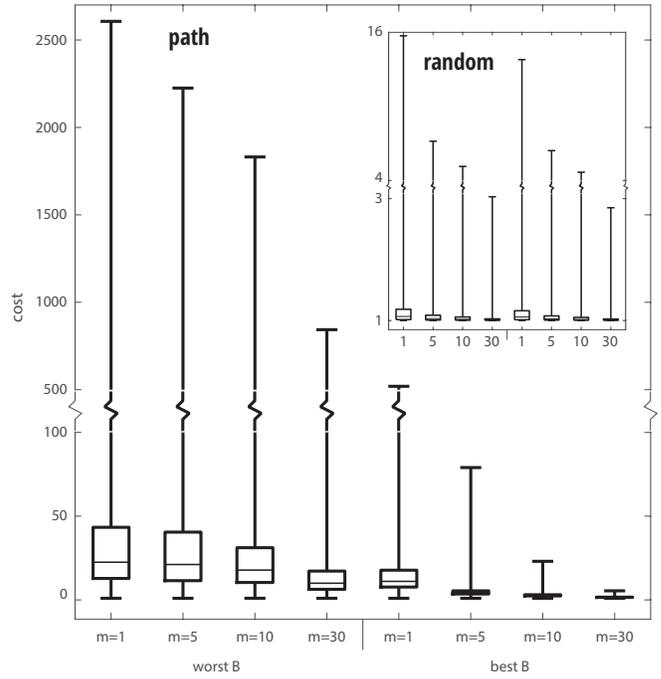}
\caption{For the path graph with $n=100$ nodes, $m\in\{1,5,10,30\}$ actuators were selected in both a greedy (minimizing the cost) and anti-greedy (maximizing the cost) fashion. Each of the box plots represent a sample distribution of the costs associated with 1000 (normally) randomly generated initial states $x_0$ with that $\|x_0\|=1$. The inset plot shows the same results for the Erdos-Renyi random graph. These box plots demarcate the mean, first and third quartiles (box), and minimum and maximum (whiskers).} \label{fig:randomx0}
\end{figure}

\begin{figure*}[t]
\centering
\includegraphics[width=0.9\linewidth]{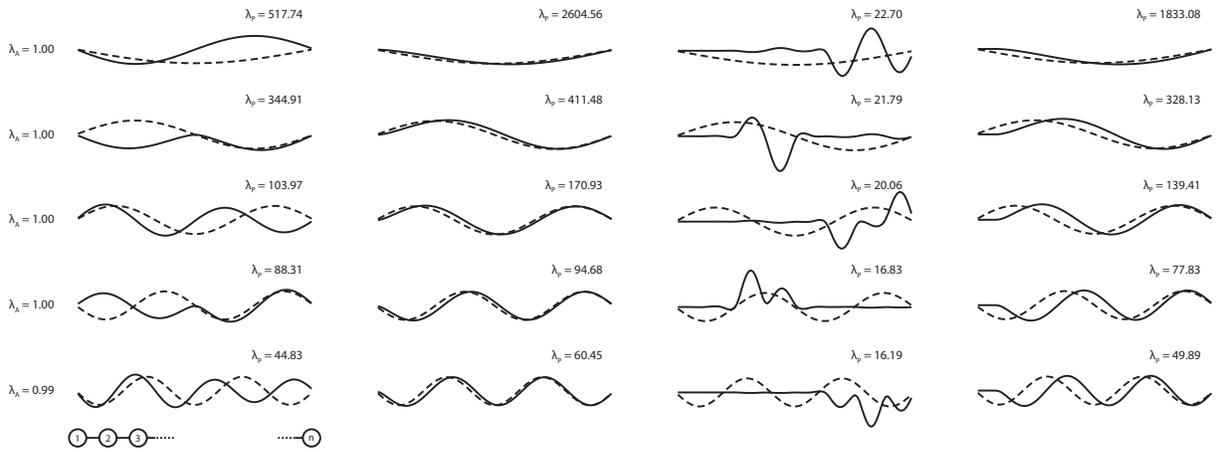}
\caption{For the path graph with $n=100$ nodes, the first five modes (in decreasing absolute value of eigenvalue) of $A$ are plotted in dashed black. Overlaid in solid black are the first five modes of $P_0$ corresponding to choices of $B$ for (from left to right) the single best actuator ($m=1$), the single worst actuator ($m=1$), the 10 best actuators ($m=10$), and the 10 worst actuators ($m=10$). Here ``best'' and ``worst'' are found using a greedy method.}\label{fig:modes}
\end{figure*}

We now turn to look at the variation in the cost caused by selectively choosing certain directions in state space to regulate. For a given number of actuators $m$, we pick the best selection of $m$ actuators and also pick the worst selection of $m$ actuators. We find these (approximate) best and worst case actuators sets by, respectively minimizing and maximizing the cost using a greedy algorithm. For each of these cases, we draw 1000 initial state vectors $x_0$ from a normal distribution, normalize them to lie on the $\|x_0\|=1$ ball, and compute the cost $x_0^TP_0x_0$ for regulating that specific direction. Fig. \ref{fig:randomx0} displays these sample distributions for the $n=100$ path graph for $m\in\{1,5,10,30\}$ actuators. The inset plot shows the same for the Erdos-Renyi random graph. By selecting the best and worst actuator choices, we have captured the extreme cases due to actuator selection; every other choice of $B$ would fall (roughly) in between, falling in line with the results of Fig. \ref{fig:randomBs}. We observe that ideal actuator selection results in a system that has significantly less variation due to direction. More specifically, the optimal choice of actuators eliminates, or greatly reduces, the effect of the most unstable modes present in $A$.

One way to interpret the distributions in Fig. \ref{fig:randomx0} is that we know the directions that are most and least costly to regulate - these are the eigenvectors (modes) of $P_0$ corresponding, respectively, to the largest and smallest absolute eigenvalues of $P_0$. For a given box and whisker, the maximum value is attained at $v_1^TP_0v_1$, where $v_1$ is the eigenvector corresponding to $\lambda_1$ of $P_0$ and we have ordered our eigenvalues such that $\lambda_1 \geq \lambda_2 \geq \dots \geq \lambda_n$. Likewise, the direction cheapest to regulate is $v_n$, which is the minimum of the distribution captured by the box plots. All other directions fall between these extremes. 

To see this more clearly, in Fig. \ref{fig:modes}, we plot the first five modes of $A$ and $P_0$ for the path graph (again ordering the eigenvectors according to descending absolute value of their corresponding eigenvalue) for best and worst actuator selection with $m=1$ and $m=10$. The eigenvectors of $A$ (dashed) encode the modes expressed in the dynamics due to the network structure and the eigenvectors of $P_0$ encode the directions in state space can break down the overall LQR cost. The best placed single actuator lies at the middle of the path, whereas the worst lies at one of the ends.  We observe that the ideal actuator changes the modes of the path network substantially whereas the worst actuator choice does not change the modes, indicating that an actuator placed at the end of the path does not have a significant impact on the dynamics of the network. The largest eigenvalues in the best and worst case differ by approximately a factor of four. The effect is exaggerated in the $m=10$ case, where the best actuators are evenly spaced throughout the path and the worst actuators are all aggregated at one end. A similar pattern is observed with respect to mode shape and the difference in the largest eigenvalue of $P_0$ is about a factor of 80.

\section{Conclusions}
We have derived a set of performance bounds for optimal feedback control in networks that provide insight into fundamental difficulties of network control as a function of the dynamics structure and control architecture. Ongoing and future work includes deriving tighter and more general bounds to include input effectiveness and logarithmic capacity of dynamics eigenvalues \cite{olshevsky2016eigenvalue}, studying similar properties for dynamic game performance metrics, and conducting more elaborate case studies. 

\pagestyle{empty}

\bibliographystyle{IEEEtran}
\bibliography{refs.bib}

\end{document}